\documentclass[11pt]{article}

\usepackage[margin=1in]{geometry}
\usepackage{amsmath,amssymb,amsthm}
\usepackage{mathtools}
\usepackage{enumitem}
\usepackage{hyperref}
\usepackage{xcolor}

\newif\ifdraftremarks
\draftremarksfalse   

\newtheorem{theorem}{Theorem}
\newtheorem{lemma}[theorem]{Lemma}
\newtheorem{proposition}[theorem]{Proposition}
\newtheorem{corollary}[theorem]{Corollary}
\theoremstyle{definition}

\newtheorem{remark}[theorem]{Remark}

\newcommand{\Fpm}{\{-1,+1\}}
\newcommand{\jc}{j}
\newcommand{\Jc}{J}

\title{A divisibility theorem for odd $J$-characteristics of two-level designs}

\author{Pieter Thijs Eendebak\thanks{Corresponding author: \texttt{pieter.eendebak@gmail.com}}}
\date{}

\begin{document}
\maketitle

\begin{abstract}
We prove a divisibility theorem for the signed $J$-characteristics of two-level
designs: if the number of factors $n$ is odd and every $J$-characteristic of a
proper odd-cardinality subset of factors vanishes, then the top
$J$-characteristic is divisible by $2^{n-1}$. As an arithmetic consequence, any two-level design whose
$J$-characteristics vanish in orders one, two, three, five, and seven but
which has a nonzero odd-order $J$-characteristic must have at least $256$ runs. This
settles, uniformly in the number of factors, a conjecture of Eendebak, Schoen,
Vazquez, and Goos (2023) on the nonexistence of certain strength-three
even--odd designs with $56$ or $64$ runs. The divisibility bound is sharp at
every odd order and is attained by the even-weight half-fraction.
\end{abstract}

\medskip
\noindent\textbf{Keywords:} two-level designs; 
$J$-characteristics; Walsh--Hadamard transform;
 even--odd designs;
strength-$3$ designs.

\smallskip
\noindent\textbf{MSC 2020:} 05B15 (primary); 62K15, 06E30 (secondary).

\section{Introduction}

Write $[n]:=\{1,2,\dots,n\}$ for the index set of factors. A two-level design
with $N$ runs on $n$ factors is a multiset of $N$ \emph{runs}
$r=(r_1,\dots,r_n)\in\Fpm^n$, where $r_i\in\{-1,+1\}$ is the level of the
$i$-th factor in run~$r$. Its (signed) \emph{$J$-characteristics}%
~\cite{DengTang1999,Tang2001} are the integer sums
\begin{equation}\label{eq:jchar-intro}
  \jc(s)=\sum_{\text{runs }r}\prod_{i\in s} r_i, \qquad s\subseteq[n],
\end{equation}
indexed by subsets of factors. They are the integer coordinates of the design
in the Walsh--Hadamard basis~\cite{ODonnell2014}. We write $\Jc(s)=|\jc(s)|$
and $J_q\equiv 0$ when $\jc(s)=0$ for every $s\subseteq[n]$ of cardinality $q$.
A design has \emph{strength~$t$} if $J_1\equiv\dots\equiv J_t\equiv 0$.
Throughout we write divisibility as $a\mid b$, read ``$a$ divides $b$,''
meaning $b=ak$ for some integer $k$.

This paper is motivated by an application to \emph{even--odd} designs of
strength~$3$: strength-$3$ designs which carry both a nonzero even-order and a
nonzero odd-order $J$-characteristic. Eendebak, Schoen, Vazquez and
Goos~\cite{ESVG2023} conjectured, on the basis of partial enumeration, that
no such design exists at $N\in\{56,64\}$ once $n\ge 9$ and $J_5\equiv J_7\equiv
0$. We prove a sharper and structurally transparent statement that implies the
conjecture uniformly in~$n$.

\paragraph{Main results.}

\begin{enumerate}[nosep]
  \item (Theorem~\ref{thm:divis}) If $n$ is \emph{odd} and every odd-order
        characteristic except possibly the top one vanishes, then
        $2^{\,n-1}\mid \jc([n])$.
  \item (Corollary~\ref{cor:main}) Any strength-3 design with $J_5\equiv
        J_7\equiv 0$ that has any nonzero odd-order $J$-characteristic must
        satisfy $N\ge 2^{\,8}=256$. Consequently no such design exists for
        $N<256$, for any $n$.
\end{enumerate}

\section{Preliminaries}

A two-level design with $N$ runs is
encoded by its row-multiplicity vector $f:\Fpm^n\to\mathbb Z_{\ge 0}$ with
$\sum_x f(x)=N$. For $s\subseteq[n]$ set
\begin{equation}\label{eq:chi-jc}
  \chi_s(x)=\prod_{i\in s} x_i\in\Fpm,
  \qquad
  \jc(s)=\sum_{x\in\Fpm^n} f(x)\,\chi_s(x).
\end{equation}
Each $\jc(s)$ is an integer with $|\jc(s)|\le N$. When several designs appear simultaneously we
write $\jc_f(s)$ to indicate the underlying function; otherwise the subscript
is omitted. Lemmas~\ref{lem:orth} and~\ref{lem:inv} below are standard Walsh--Hadamard identities (see e.g.~\cite{ODonnell2014}, \S1.2); we include self-contained proofs for completeness.

\begin{lemma}[Orthogonality]\label{lem:orth}
For $x,y\in\Fpm^n$,
\begin{equation}\label{eq:orth}
  \sum_{s\subseteq[n]} \chi_s(x)\,\chi_s(y)
  =\begin{cases} 2^n & x=y,\\ 0 & x\ne y.\end{cases}
\end{equation}
\end{lemma}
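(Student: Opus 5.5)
The plan is to factor the sum over subsets coordinatewise, using the fact that $\chi_s(x)\chi_s(y)$ is a product over the coordinates in $s$. For $x,y\in\Fpm^n$ we have
\[
  \chi_s(x)\,\chi_s(y)=\prod_{i\in s} x_i y_i .
\]
Summing over all $s\subseteq[n]$ is the same as choosing, independently for each $i\in[n]$, whether to include $i$. Including $i$ contributes the factor $x_iy_i$, and excluding it contributes the factor $1$. By the distributive law this gives
\[
  \sum_{s\subseteq[n]}\chi_s(x)\chi_s(y)=\prod_{i=1}^{n}\bigl(1+x_iy_i\bigr).
\]
This expansion is the only step that needs stating carefully, since it is exactly the subset expansion of a product of binomials. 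I do not expect any real obstacle.

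Next I evaluate the individual factors. Since $x_i,y_i\in\{-1,+1\}$, the product $x_iy_i$ equals $+1$ when $x_i=y_i$ and $-1$ when $x_i\neq y_i$. The factor $1+x_iy_i$ is therefore $2$ in the first case and $0$ in the second.

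There are two cases. If $x=y$, every factor equals $2$ and the product is $2^n$. If $x\neq y$, some coordinate $i$ has $x_i\neq y_i$; that factor vanishes, so the whole product is $0$. This is exactly the claimed identity.

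As a sanity check, one could instead argue with $z=xy$ (coordinatewise), noting $\chi_s(x)\chi_s(y)=\chi_s(z)$. When $z$ is not the all-ones vector, fix an index $k$ with $z_k=-1$ and pair each $s$ with $s\triangle\{k\}$; the paired terms cancel. The product formula above is shorter, so I would present that one.
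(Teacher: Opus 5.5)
Your proof is correct and matches the paper's argument: you write $\chi_s(x)\chi_s(y)=\prod_{i\in s}x_iy_i$, expand $\prod_{i=1}^n(1+x_iy_i)$ over subsets, and observe that each factor is $2$ when $x_i=y_i$ and $0$ otherwise. The pairing argument you mention as a sanity check is also valid, but it is not needed.
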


\begin{proof}
$\chi_s(x)\chi_s(y)=\prod_{i\in s} x_i y_i$, so
$\sum_{s\subseteq[n]}\chi_s(x)\chi_s(y)=\sum_{s\subseteq[n]}\prod_{i\in s}x_iy_i=\prod_{i=1}^n(1+x_iy_i)$,
the last step by expanding the product over all subsets. Each factor is $2$ when
$x_i=y_i$ and $0$ otherwise.
\end{proof}

\begin{lemma}[Inversion]\label{lem:inv}
For every $f:\Fpm^n\to\mathbb Z$ and every $x\in\Fpm^n$,
\begin{equation}\label{eq:inv}
  2^n\, f(x)=\sum_{s\subseteq[n]} \jc(s)\,\chi_s(x).
\end{equation}
\end{lemma}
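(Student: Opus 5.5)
The plan is to substitute the definition \eqref{eq:chi-jc} of $\jc(s)$ into the right-hand side of \eqref{eq:inv}. We then exchange the two finite sums and apply Lemma~\ref{lem:orth} to collapse the inner sum. Concretely, for fixed $x\in\Fpm^n$ we write
\[
  \sum_{s\subseteq[n]} \jc(s)\,\chi_s(x)
  =\sum_{s\subseteq[n]}\sum_{y\in\Fpm^n} f(y)\,\chi_s(y)\,\chi_s(x).
\]
Both index sets are finite ($2^n$ subsets and $2^n$ points), so interchanging the order of summation is legitimate without any convergence considerations. This gives
\[
  \sum_{y\in\Fpm^n} f(y)\sum_{s\subseteq[n]}\chi_s(x)\,\chi_s(y).
\]

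Next we invoke Lemma~\ref{lem:orth}, which evaluates the inner sum as $2^n$ when $y=x$ and $0$ otherwise. Hence every term with $y\ne x$ drops out. Only $y=x$ survives, contributing $2^n f(x)$, which is exactly the left-hand side of \eqref{eq:inv}.

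There is no real obstacle here. The only point to note is that the argument never uses nonnegativity of $f$, nor that $\sum_x f(x)=N$. It is purely linear in $f$, so it holds for every $f:\Fpm^n\to\mathbb Z$, and indeed for any real- or complex-valued $f$, as the statement requires. The identity also shows that $f$ is determined by its $J$-characteristics, and that $\sum_s \jc(s)\chi_s(x)$ is always divisible by $2^n$. That integrality constraint is presumably what the later divisibility arguments exploit.
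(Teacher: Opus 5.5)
Your proposal is correct and is essentially the paper's own proof: substitute $\jc(s)=\sum_y f(y)\chi_s(y)$, interchange the two finite sums, and collapse the inner sum with Lemma~\ref{lem:orth}. Your remark that the argument is linear in $f$ and never uses $f\ge 0$ is also right; this is why the paper can state the lemma for arbitrary $f:\Fpm^n\to\mathbb Z$.
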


\begin{proof}
Expand $\jc(s)=\sum_y f(y)\chi_s(y)$, swap sums, and apply
Lemma~\ref{lem:orth}:
\[
\sum_s\jc(s)\chi_s(x)
=\sum_y f(y)\sum_s\chi_s(y)\chi_s(x)
=\sum_y f(y)\cdot 2^n[y{=}x]
=2^n f(x).\qedhere
\]
\end{proof}

\section{The divisibility theorem}

\begin{theorem}[Divisibility of the top odd characteristic]\label{thm:divis}
Let $n\ge 1$ be \emph{odd} and let $f:\Fpm^n\to\mathbb Z$. If
\begin{equation}\label{eq:divhyp}
  \jc(s)=0 \quad\text{for every }s\subseteq[n]\text{ with $|s|$ odd and }s\ne[n],
\end{equation}
then $2^{\,n-1}\mid \jc([n])$.
\end{theorem}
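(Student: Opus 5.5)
The plan is to isolate the odd-order part of the Walsh--Hadamard expansion by comparing $f$ at antipodal points, and then read the divisibility off the inversion formula of Lemma~\ref{lem:inv}.

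First, observe that for every $s\subseteq[n]$ and $x\in\Fpm^n$ we have $\chi_s(-x)=(-1)^{|s|}\chi_s(x)$. Apply Lemma~\ref{lem:inv} at $x$ and at $-x$ and subtract. The even-cardinality terms cancel and the odd-cardinality terms double:
\begin{equation*}
  2^n\bigl(f(x)-f(-x)\bigr)=\sum_{s\subseteq[n]}\jc(s)\bigl(\chi_s(x)-\chi_s(-x)\bigr)=2\sum_{|s|\text{ odd}}\jc(s)\,\chi_s(x).
\end{equation*}

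Next, invoke hypothesis~\eqref{eq:divhyp}. Every odd-cardinality $s\neq[n]$ contributes zero. Since $n$ is odd, $[n]$ itself has odd cardinality, so it is the single surviving term. This gives
\begin{equation*}
  2^{n-1}\bigl(f(x)-f(-x)\bigr)=\jc([n])\,\chi_{[n]}(x).
\end{equation*}
This is exactly where the oddness of $n$ is used: for even $n$ the top character would have canceled in the subtraction.

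Finally, fix any $x$, say $x=(1,\dots,1)$, so that $\chi_{[n]}(x)=1$. Then $\jc([n])=2^{n-1}\bigl(f(x)-f(-x)\bigr)$. Because $f$ is integer-valued, this shows $2^{n-1}\mid\jc([n])$.

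There is no real obstacle here. The only step requiring care is the sign bookkeeping $\chi_s(-x)=(-1)^{|s|}\chi_s(x)$ together with the observation that $[n]$ is odd. As a byproduct, the argument shows that $f(x)-f(-x)$ equals $\chi_{[n]}(x)\,\jc([n])/2^{n-1}$ for every $x$, which will be useful for the sharpness discussion later in the paper.
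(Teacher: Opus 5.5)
Your proof is correct and follows the paper's argument step for step. It applies Lemma~\ref{lem:inv} at $x$ and $-x$, uses $\chi_s(-x)=(-1)^{|s|}\chi_s(x)$ to isolate the odd-order terms, collapses the sum to $\jc([n])\chi_{[n]}(x)$ via the hypothesis and the oddness of $n$, and evaluates at $x=(1,\dots,1)$.
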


\begin{proof}
Apply Lemma~\ref{lem:inv} at $x$ and at $-x$. Since
$\chi_s(-x)=(-1)^{|s|}\chi_s(x)$, subtracting gives
\[
  2^n\bigl(f(x)-f(-x)\bigr)
   = \sum_{s\subseteq[n]} \jc(s)\bigl(1-(-1)^{|s|}\bigr)\chi_s(x)
   = 2\!\!\sum_{|s|\text{ odd}}\!\!\jc(s)\,\chi_s(x),
\]
because $1-(-1)^{|s|}$ is $0$ for even $|s|$ and $2$ for odd $|s|$. By
\eqref{eq:divhyp} the only odd-cardinality $s$ with $\jc(s)\ne 0$ is
$s=[n]$ --- here we use that $n$ is odd, so $[n]$ itself has odd cardinality.
The right-hand sum collapses to $\jc([n])\chi_{[n]}(x)$, and dividing by~$2$:
\begin{equation}\label{eq:key}
  2^{\,n-1}\bigl(f(x)-f(-x)\bigr)=\jc([n])\,\chi_{[n]}(x).
\end{equation}
Take $x=(+1,\dots,+1)$, so $\chi_{[n]}(x)=1$; the left-hand side is
$2^{\,n-1}$ times the integer $f(x)-f(-x)$, hence
$2^{\,n-1}\mid \jc([n])$.
\end{proof}

\ifdraftremarks
\begin{remark}\color{blue}%
\footnote{This remark with explicit example is to be removed in the final paper.}
The hypothesis that $n$ is odd is essential. For even $n$ the set $[n]$ has
even cardinality and is not among the odd-order characteristics controlled by
\eqref{eq:divhyp}; the conclusion can fail.

\smallskip\noindent\textbf{Explicit counterexample at $n=4$.}
Define $f:\Fpm^4\to\mathbb Z$ by assigning multiplicity based on the number
of $-1$ entries (the weight~$w$):
\[
  f(x)=\begin{cases}
    3 & w\in\{0,4\},\\
    2 & w\in\{1,3\},\\
    0 & w=2.
  \end{cases}
\]
This is a (non-negative) design with $N=22$ rows. By the weight-based
symmetry, $\jc(s)=0$ for every subset~$s$ of odd cardinality ($|s|=1$
or~$3$). For the full set $[4]$, the product
$x_1 x_2 x_3 x_4=(-1)^w$, so
\[
  \jc([4])
  = 1{\cdot}3{\cdot}(+1)+4{\cdot}2{\cdot}(-1)
    +6{\cdot}0{\cdot}(+1)+4{\cdot}2{\cdot}(-1)
    +1{\cdot}3{\cdot}(+1)
  = -10,
\]
which is not divisible by~$2^3=8$.

Note that this design is not strength~3 (it has $\jc(\{i,j\})=6\ne0$).
Indeed, for any strength-3 design with $8\mid N$, the result of Deng and
Tang~(1999) gives $J_4(s)=N-16p$, forcing $8\mid J_4$ automatically.
The counterexample shows that the divisibility theorem applies to
\emph{general} integer-valued functions on the cube, not only to
orthogonal arrays.
\end{remark}
\fi

\begin{corollary}[Run-size bound]\label{cor:runbound}
Under the hypothesis of Theorem~\ref{thm:divis}, if $\jc([n])\ne 0$ then
$|\jc([n])|\ge 2^{\,n-1}$. If moreover $f\ge 0$, so that $f$ is the
row-multiplicity vector of a design, then $N=\sum_x f(x) \ge 2^{\,n-1}$.
\end{corollary}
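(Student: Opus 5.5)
The first claim follows directly from Theorem~\ref{thm:divis}. Under hypothesis \eqref{eq:divhyp} we have $\jc([n])=2^{\,n-1}k$ for some integer $k$. If $\jc([n])\neq 0$ then $k\neq 0$, so $|k|\ge 1$ and $|\jc([n])|=2^{\,n-1}|k|\ge 2^{\,n-1}$.

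For the second claim, assume $f\ge 0$. The plan is to compare $|\jc([n])|$ with $N$ using the triangle inequality on the definition \eqref{eq:chi-jc}:
\[
  |\jc([n])|=\Bigl|\sum_{x} f(x)\chi_{[n]}(x)\Bigr|\le \sum_x |f(x)|\,|\chi_{[n]}(x)| = \sum_x f(x)=N .
\]
Here we use $|\chi_{[n]}(x)|=1$ and $f\ge 0$. This is the bound $|\jc(s)|\le N$ already noted in the preliminaries. Chaining it with the first part gives $N\ge |\jc([n])|\ge 2^{\,n-1}$.

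I could instead use the key identity \eqref{eq:key}, which gives $f(x)-f(-x)=\pm k$ for every $x$. With $f\ge 0$, for each antipodal pair $\{x,-x\}$ at least one of the two values is at least $|k|\ge 1$. There are $2^{n-1}$ such pairs, so $N\ge 2^{\,n-1}$. This is an equally short alternative route.

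Neither part presents a real obstacle. The only point requiring care is that nonnegativity of $f$ is needed for the second claim: without it, $\sum_x f(x)$ could be small or even negative while $\jc([n])$ is large.
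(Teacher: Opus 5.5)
Your proof is correct and is essentially the paper's: Theorem~\ref{thm:divis} together with $\jc([n])\ne 0$ gives $|\jc([n])|\ge 2^{\,n-1}$, and the triangle-inequality bound $|\jc(s)|\le N$ for $f\ge 0$ gives the second claim. Your alternative via \eqref{eq:key}, counting the $2^{\,n-1}$ antipodal pairs, is also valid but is not needed.
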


\begin{proof}
$2^{\,n-1}\mid \jc([n])$ and $\jc([n])\ne 0$ give $|\jc([n])|\ge 2^{\,n-1}$,
and for $f\ge 0$ we have $|\jc(s)|\le\sum_x f(x)=N$ for every~$s$.
\end{proof}

\section{Projection and the main corollary}

For $s\subseteq[n]$ the \emph{projection} $\pi_s f$ of $f$ onto the columns
indexed by~$s$ is the function on $\Fpm^{s}$ obtained by summing over all
coordinates outside~$s$:
\begin{equation}\label{eq:projdef}
  (\pi_s f)(y)=\sum_{\substack{x\in\Fpm^n\\x_s=y}} f(x),\qquad y\in\Fpm^{s},
\end{equation}
where $x_s$ denotes the restriction of $x$ to the coordinates in~$s$.

\begin{lemma}[Projection preserves sub-characteristics]\label{lem:proj}
For every $s\subseteq[n]$ and every $u\subseteq s$,
$\jc_{\pi_s f}(u)=\jc_f(u)$.
\end{lemma}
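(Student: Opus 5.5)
The plan is a direct computation from the definitions \eqref{eq:chi-jc} and \eqref{eq:projdef}, using only that $\chi_u$ depends on the coordinates in $u$ alone. Fix $s\subseteq[n]$ and $u\subseteq s$. For $y\in\Fpm^{s}$ write $\chi_u(y)=\prod_{i\in u}y_i$; this is well defined because $u\subseteq s$. The key observation is that for every $x\in\Fpm^n$ we have $\chi_u(x)=\chi_u(x_s)$, since the product only involves the coordinates $x_i$ with $i\in u\subseteq s$.

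First substitute the definition of the projection into the definition of the characteristic:
\[
  \jc_{\pi_s f}(u)=\sum_{y\in\Fpm^{s}}(\pi_s f)(y)\,\chi_u(y)
  =\sum_{y\in\Fpm^{s}}\;\sum_{\substack{x\in\Fpm^n\\ x_s=y}} f(x)\,\chi_u(y).
\]
Inside the inner sum, $x_s=y$, so $\chi_u(y)=\chi_u(x_s)=\chi_u(x)$.

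Next regroup the double sum. The fibres $\{x: x_s=y\}$, as $y$ ranges over $\Fpm^{s}$, partition $\Fpm^n$, so the double sum collapses to $\sum_{x\in\Fpm^n} f(x)\chi_u(x)=\jc_f(u)$. All sums are finite, so rearranging them is harmless.

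There is no real obstacle here. The only point needing care is the notation: $\chi_u$ is applied to vectors indexed by $s$ on one side and by $[n]$ on the other, and the identity $\chi_u(x)=\chi_u(x_s)$ is what connects the two. The edge cases behave correctly: for $u=\emptyset$ both sides equal $N$ (or $\sum_x f(x)$ in general), and for $s=[n]$ the projection is the identity.
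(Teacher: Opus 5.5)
Your proof is correct and is essentially the paper's argument. You substitute the definition of $\pi_s f$, use $\chi_u(x)=\chi_u(x_s)$ for $u\subseteq s$, and collapse the sum over the fibres $\{x: x_s=y\}$ into a single sum over $\Fpm^n$, which yields $\jc_f(u)$.
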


\begin{proof}
Since $u\subseteq s$, $\chi_u(x)$ depends only on $x_s$. Substituting the
definition of~$\pi_s f$:
\[
\jc_{\pi_s f}(u)
   =\sum_{y\in\Fpm^s}(\pi_s f)(y)\,\chi_u(y)
   =\sum_{x\in\Fpm^n} f(x)\,\chi_u(x_s)
   =\sum_{x\in\Fpm^n} f(x)\,\chi_u(x)=\jc_f(u).\qedhere
\]
\end{proof}

\begin{corollary}[Main corollary; resolves the ESVG conjecture]\label{cor:main}
Let $A$ be a two-level strength-$3$ design with $N$ runs satisfying
$J_5\equiv J_7\equiv 0$. If $A$ has any nonzero odd-order
$J$-characteristic, then
\begin{equation}\label{eq:mainbound}
  N\ \ge\ 2^{\,q-1}\ \ge\ 2^{\,8}=256,
\end{equation}
where $q$ is the smallest odd order with a nonzero characteristic.
Consequently, for every $N<256$ --- and in particular for $N\in\{56,64\}$ ---
no such design exists, for any number of factors.
\end{corollary}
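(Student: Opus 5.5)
The plan is to reduce to Theorem~\ref{thm:divis} by projecting onto a minimal odd-order support. Let $q$ be the smallest odd cardinality such that $\jc(s)\ne 0$ for some $s\subseteq[n]$ with $|s|=q$; by assumption such a $q$ exists, and we fix a set $s$ with $|s|=q$ and $\jc(s)\ne0$. Strength~$3$ gives $J_1\equiv J_3\equiv 0$, and together with $J_5\equiv J_7\equiv 0$ this forces $q\notin\{1,3,5,7\}$. Since $q$ is odd, we get $q\ge 9$.

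Next, consider the projection $g=\pi_s f$, where $f$ is the row-multiplicity vector of $A$. It is a nonnegative integer function on $\Fpm^{s}\cong\Fpm^{q}$ with total mass $\sum_y g(y)=N$. By Lemma~\ref{lem:proj}, $\jc_g(u)=\jc_f(u)$ for every $u\subseteq s$. Any proper subset $u\subsetneq s$ of odd cardinality has $|u|<q$, so $\jc_f(u)=0$ by the minimality of $q$. Thus $g$ satisfies hypothesis~\eqref{eq:divhyp} on the index set $s$, and $|s|=q$ is odd. Relabelling coordinates, Theorem~\ref{thm:divis} gives $2^{q-1}\mid \jc_g(s)=\jc_f(s)\ne 0$. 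Corollary~\ref{cor:runbound}, applied to $g\ge 0$, then yields $N\ge 2^{q-1}\ge 2^{8}=256$.

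The consequence for $N<256$, and in particular $N\in\{56,64\}$, is immediate, and no step depends on $n$. The only point needing care is the minimality bookkeeping: we must choose $q$ minimal over \emph{all} odd orders, not just those above $7$. That choice is what guarantees that every proper odd subset of $s$ has a vanishing characteristic, which is exactly the hypothesis Theorem~\ref{thm:divis} needs. The rest is a direct application of the earlier results.
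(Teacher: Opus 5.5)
Your proposal is correct and matches the paper's proof essentially step for step. You take $q$ minimal over all odd orders (forced to be at least $9$), project onto a witnessing $q$-set $s$, use Lemma~\ref{lem:proj} with minimality to verify the hypothesis of Theorem~\ref{thm:divis}, and conclude via Corollary~\ref{cor:runbound} applied to the nonnegative projection of total mass $N$.
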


\begin{proof}
Let $f$ be the row-multiplicity vector of~$A$.
Let $q$ be the least odd integer with some $q$-subset $s$ satisfying
$\jc_f(s)\ne 0$; such $q$ exists by hypothesis. Strength~$3$ kills orders
$1$ and~$3$ and the hypotheses kill orders $5$ and~$7$, so $q\ge 9$.

By minimality of~$q$, every proper odd subset $u\subsetneq s$ has
$\jc_f(u)=0$. Project onto~$s$: by Lemma~\ref{lem:proj}, $\pi_s f$ is a
$q$-factor design with $\jc_{\pi_s f}(u)=\jc_f(u)$ for every $u\subseteq s$.
Hence in $\pi_s f$ every odd-order characteristic vanishes except possibly
the top one $\jc_{\pi_s f}(s)=\jc_f(s)\ne 0$. Theorem~\ref{thm:divis} applied to the $q$-factor design $\pi_s f$ gives
$2^{\,q-1}\mid \jc_f(s)$, so by Corollary~\ref{cor:runbound}
$N\ge 2^{\,q-1}\ge 2^{\,8}=256$.
\end{proof}

\section{Sharpness}

The threshold of Corollary~\ref{cor:runbound} is attained.

\begin{proposition}\label{prop:sharp}
For every $q\ge 1$ let $H_q=\{x\in\Fpm^q:\chi_{[q]}(x)=+1\}$ be the
even-weight half-fraction, with multiplicity one. Then $H_q$ has $2^{\,q-1}$
runs, $\jc([q])=2^{\,q-1}$, and $\jc(s)=0$ for every
$\varnothing\ne s\ne[q]$.
\end{proposition}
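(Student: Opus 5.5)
The plan is to compute everything directly from the definition \eqref{eq:chi-jc} with $f=\mathbf 1_{H_q}$, the indicator of $H_q$. The tool is the identity $\mathbf 1_{H_q}(x)=\tfrac12\bigl(1+\chi_{[q]}(x)\bigr)$. It holds because $\chi_{[q]}(x)\in\Fpm$, and $\chi_{[q]}(x)=+1$ exactly when $x$ has an even number of $-1$ entries.

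Substituting this identity, for every $s\subseteq[q]$ I get
\[
  \jc(s)=\tfrac12\sum_{x\in\Fpm^q}\chi_s(x)+\tfrac12\sum_{x\in\Fpm^q}\chi_s(x)\chi_{[q]}(x).
\]
Since $x_i^2=1$, we have $\chi_s(x)\chi_{[q]}(x)=\chi_{[q]\setminus s}(x)$. So everything reduces to the basic sum $\sum_x\chi_u(x)$, which equals $2^q$ if $u=\varnothing$ and $0$ otherwise. This follows from Lemma~\ref{lem:orth} with $y=(+1,\dots,+1)$ after exchanging the roles of sets and points. It also follows directly by factoring $\sum_x\prod_{i\in u}x_i=\prod_{i\in u}(x_i\text{-sum})\cdot 2^{q-|u|}$, where each factor $\sum_{x_i=\pm1}x_i=0$.

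I then read off the cases:
\begin{itemize}[nosep]
  \item For $s=\varnothing$ with $q\ge1$, only the first sum survives. This gives $N=\jc(\varnothing)=2^{q-1}$.
  \item For $s=[q]$, only the second sum survives, because $[q]\setminus s=\varnothing$. This gives $\jc([q])=2^{q-1}$.
  \item For $\varnothing\ne s\ne[q]$, both $s$ and its complement are nonempty, so both sums vanish and $\jc(s)=0$.
\end{itemize}

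There is no real obstacle here. The only care needed is the edge case $q=1$, where no $s$ satisfies $\varnothing\ne s\ne[q]$, so that claim is vacuous. There $H_1=\{(+1)\}$ has one run and $\jc([1])=1=2^0$, which matches. It is also worth remarking that, for odd $q$, $H_q$ meets hypothesis \eqref{eq:divhyp} with $|\jc([q])|=N=2^{q-1}$, so the bounds in Theorem~\ref{thm:divis} and Corollary~\ref{cor:runbound} are attained.
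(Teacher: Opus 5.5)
Your proof is correct, but it takes a different route from the paper's.

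The paper argues inside the subgroup $H_q$:
\begin{itemize}
  \item $H_q$ is the kernel of the homomorphism $\chi_{[q]}$, hence has $2^{q-1}$ elements.
  \item $\chi_{[q]}\equiv 1$ on $H_q$, which gives $\jc([q])=2^{q-1}$.
  \item For $\varnothing\ne s\ne[q]$, the restriction of $\chi_s$ to $H_q$ is a nontrivial character. The witness is the weight-two vector with $-1$ exactly at some $i\in s$ and some $k\notin s$. A nontrivial character sums to zero, so $\jc(s)=0$.
\end{itemize}

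You instead expand the indicator as $\tfrac12\bigl(1+\chi_{[q]}\bigr)$ in the Walsh basis and use orthogonality on the full cube. In your argument the condition $s\ne[q]$ enters as nonemptiness of $[q]\setminus s$, not through an explicit element of $H_q$.

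What your approach buys:
\begin{itemize}
  \item It yields the whole characteristic vector in one computation, $\jc(s)=2^{q-1}\bigl([s=\varnothing]+[s=[q]]\bigr)$, with the run count appearing uniformly as $\jc(\varnothing)$.
  \item It fits the paper's Walsh--Hadamard framework.
  \item It extends directly to any regular fraction. Such a fraction's indicator is the average of the characters in its defining-relation subgroup $W$ of size $2^k$, which gives $\jc(s)=2^{q-k}[s\in W]$.
\end{itemize}

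What the paper's approach buys: it needs only the group structure of $H_q$ and the vanishing of nontrivial character sums. It makes no appeal to orthogonality on the ambient cube.

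One minor point. The identity $\sum_x\chi_u(x)=2^q[u=\varnothing]$ is the dual of Lemma~\ref{lem:orth}: a sum over points rather than over sets. Getting it from that lemma uses the symmetry $\chi_s(x)=(-1)^{|s\cap\{i:x_i=-1\}|}$. Your direct factorization already proves it, so nothing is missing.
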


\begin{proof}
$H_q$ is the kernel of the group homomorphism $x\mapsto\chi_{[q]}(x)$ from
$(\Fpm^q,\cdot)$ to $\Fpm$, hence has $2^{\,q-1}$ elements. For $s=[q]$,
$\chi_{[q]}\equiv 1$ on~$H_q$, so $\jc([q])=2^{\,q-1}$. For
$\varnothing\ne s\ne[q]$, $\chi_s$ restricts to a nontrivial character of
the group~$H_q$, with sum~$0$; it is nontrivial because, picking $i\in s$
and $k\notin s$, the vector with $-1$ in exactly positions $i$ and~$k$ has
even weight, hence lies in~$H_q$, and has $\chi_s=-1$.
\end{proof}

For $q=7$ the design $H_7$ is the regular $2^{7-1}_{\mathrm{VII}}$ fractional
factorial~\cite{BoxHunter1961,BoxHunterHunter2005}: a strength-$6$ design at $N=64$ with $J_5\equiv 0$ and $J_7=64$.
Hence the hypothesis $J_7\equiv 0$ in Corollary~\ref{cor:main} --- equivalently
the restriction $n\ge 9$ in the original conjecture --- cannot be dropped
at~$N=64$.

\section*{Acknowledgements}

We thank Eric Schoen for comments that improved the exposition. The proofs and
exposition in this paper were developed in interactive sessions with
Anthropic's Claude (Opus~4.7), prompted to formulate, prove, and refine the
divisibility theorem and its application to the ESVG conjecture; the human
author selected the problem, supplied feedback at each iteration, and verified
the final argument.

\end{document}